\newtheorem{theorem}{Theorem}[section]
\newtheorem{lemma}[theorem]{Lemma}
\newtheorem{assertion}[theorem]{Assertion}
\newtheorem{corollary}[theorem]{Corollary}
\theoremstyle{remark}
\newtheorem{definition}[theorem]{Definition}
\newtheorem{example}[theorem]{Example}
\newcommand{\cc}{\mathcal C}
\newcommand{\cl}{\mathcal L}
\newcommand{\cm}{\mathcal M}
\newcommand{\cn}{\mathcal N}
\newcommand{\cp}{\mathcal P}
\newcommand{\cq}{\mathcal Q}
\newcommand{\zn}{\mathbb{Z}_n}
\newcommand{\vc}{V^\circ}
\begin{document}

\makeatletter

\makeatother
\author{Ron Aharoni}
\address{Department of Mathematics\\ Technion, Haifa, Israel}
\thanks{The research of the first author was  supported by  an ISF grant, BSF grant no. 2006099 and by the Discount Bank Chair at the Technion.}
\email[Ron Aharoni]{raharoni@gmail.com}

\author{Dani Kotlar}
\address{Department of Computer Science\\ Tel-Hai college, Israel}

\author{Ran Ziv}
\address{Department of Computer Science\\ Tel-Hai college, Israel}

\title{Uniqueness of the extreme cases in theorems of Drisko and Erd\H{o}s-Ginzburg-Ziv}

\maketitle
\begin{abstract}
Drisko \cite{drisko}  proved (essentially) that every family of
$2n-1$ matchings of size $n$ in a bipartite graph  possesses a
partial rainbow matching of size $n$. In \cite{bgs} this was
generalized as follows: Any $\lfloor \frac{k+2}{k+1} n \rfloor
-(k+1)$ matchings of size $n$ in a bipartite graph have a rainbow
matching of size $n-k$. We extend this latter result to matchings of
not necessarily equal cardinalities.
 Settling a conjecture of
Drisko, we characterize those families of $2n-2$ matchings of size
$n$ in a bipartite graph that do not possess a rainbow matching of
size $n$. Combining this with  an idea of Alon \cite{alon}, we
re-prove a characterization of the extreme case in a well-known
theorem of Erd\H{o}s-Ginzburg-Ziv  in additive number theory.
\end{abstract}

\section{Introduction}
Let $\cc=(C_1, \ldots ,C_m)$ be a system of sets.  A set that is the
range  of an injective partial choice function from $\cc$ is called
a {\em rainbow set}, and is also said to be {\em multicolored} by
$\cc$.
 If $\phi$ is such a partial choice function and $C_i \in dom(\phi)$ we say
 that $C_i$ {\em colors} $\phi(C_i)$ in the rainbow set. If the elements of $C_i$ are sets then a rainbow set is said to be
a {\em (partial)
rainbow matching} if its range is a matching, namely it consists of disjoint sets.\\

Let $A$ be an $m \times n$ matrix whose elements are symbols. A {\em
partial transversal} in $A$ is a set  entries with no two in the
same row or column, and no two sharing the same symbol. If the
partial transversal is of size $\min(m,n)$ then it is called {\em
full}, or simply a {\em transversal}. A famous conjecture of
Ryser-Brualdi-Stein (see \cite{stein}) is that in an $n \times n$
Latin square there is a partial transversal of size $n-1$.
 Drisko \cite{drisko} showed that this is true if the assumption is ``doubled'':

\begin{theorem}\label{originaldrisko}
Let $A$ be an $m\times n$ matrix in which the symbols in each row
are all distinct. If $m\ge 2n-1$, then $A$ has a full transversal.
\end{theorem}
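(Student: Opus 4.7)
\emph{Proof plan.} First I would translate the statement into the bipartite rainbow-matching language used in the paper. Let $C=\{c_1,\dots,c_n\}$ index the columns and let $S$ be the set of symbols in $A$; each row $i$ determines a matching $M_i=\{(c_j,A_{i,j}):j\in[n]\}$ of size $n$ saturating $C$ in the bipartite graph on $C\cup S$. A full transversal is exactly a rainbow matching of size $n$ drawn from $M_1,\dots,M_m$, so it suffices to show that any $2n-1$ matchings of size $n$, each saturating a common $n$-vertex side $C$, admit a rainbow matching of size $n$.

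I would argue by contradiction. Let $F$ be a rainbow matching of maximum size $f$, and suppose $f<n$. Write $I\subseteq[2n-1]$ for the set of colors of $F$, set $X=V(F)\cap C$ and $Y=V(F)\cap S$ (so $|X|=|Y|=f$), and note $|[2n-1]\setminus I|\geq n$. For every unused color $c\notin I$, each edge of $M_c$ must meet $V(F)$---otherwise appending it to $F$ with the fresh color $c$ would enlarge $F$---and since $M_c$ saturates $C$, each vertex of $C\setminus X$ is matched by $M_c$ into $Y$. In particular $n-f\leq f$, giving the preliminary bound $f\geq n/2$.

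The core of the plan is to upgrade this one-edge argument to an augmentation along a \emph{multicolored alternating chain}: a path $v_0,y_1,x_1,y_2,x_2,\dots,x_k,y_{k+1}$ with $v_0\in C\setminus X$, the edges $(x_{j-1},y_j)$ coming from pairwise distinct unused matchings $M_{c_1},\dots,M_{c_{k+1}}$, and the edges $(y_j,x_j)$ being the $F$-edges at $y_j$ (where $x_0:=v_0$). If such a chain escapes, meaning $y_{k+1}\notin Y$, then discarding the $F$-edges on the chain and adding the chain's $M_{c_j}$-edges produces a rainbow matching of size $f+1$: the newly introduced colors $c_1,\dots,c_{k+1}$ are distinct and disjoint from $I$, while the discarded colors lay in $I$. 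The plan is thus to show that an escaping chain always exists, by exploiting the surplus of at least $n$ unused matchings: each contributes $n-f$ edges into $Y$ via $C\setminus X$, crowding the bipartite graph on $(C\setminus X)\cup Y$, and it is this crowding, combined with the freedom of choice of unused color at each step of the chain, that forces an escape.

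The principal obstacle is precisely this last step: one must rule out that the chain-building process stays trapped forever inside $V(F)$. I expect this to require a potential function argument (for instance on the sum of indices in $I$, or on the length of a shortest chain from $C\setminus X$ to an outside vertex) so that, in the absence of an immediate escape, one identifies a closed alternating circuit on which rotating colors of $F$ strictly decreases the potential, with iteration eventually producing an escape. The threshold $2n-1$ is tight here---with only $2n-2$ matchings the required pigeonhole just fails---and the extremal examples studied later in the paper identify precisely when this last step breaks down.
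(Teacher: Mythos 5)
Your reduction of the matrix statement to a rainbow-matching statement is the same as the paper's (each row yields a matching of size $n$ between columns and symbols, and a full transversal is exactly a rainbow matching of size $n$ for the family $M_1,\ldots,M_m$), and your augmentation step is sound: an ``escaping'' multicolored alternating chain, symmetric-differenced with $F$, does produce a larger rainbow matching, exactly as in the paper's derivation of Theorem \ref{moregeneral} from Corollary \ref{st} via augmenting $F$-alternating paths. The genuine gap is that you never prove the escaping chain exists --- you explicitly defer this to an unspecified ``potential function argument'' --- and this step is the entire content of the theorem. Your crowding observations (every edge of an unused matching meets $V(F)$, each unused matching sends all of $C\setminus X$ into $Y$, hence $f\ge n/2$) do not force an escape: a chain-building process can genuinely stay trapped inside $V(F)$, and it does so in the extremal configuration with $2n-2$ matchings (Example \ref{only}), which satisfies every counting fact you state. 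Nothing in your plan isolates what changes at $2n-1$.

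The paper closes precisely this gap with a reachability count, Theorem \ref{counting}: converting the unused matchings into families $\mathcal{P}_1,\ldots,\mathcal{P}_m$ of pairwise disjoint augmenting $F$-alternating paths in an auxiliary network whose inner vertices correspond to the edges of $F$, it shows that the number of vertices reachable from the source by multicolored paths strictly exceeds the total number of paths; the proof is an induction on the number of paths in which the source is contracted with the first steps of one color class. Since at least $n$ matchings are unused and each contributes at least one augmenting path (Lemma \ref{alternating}), while $F$ has at most $n-1$ edges, the sink is reachable and the multicolored augmenting path exists. To complete your proposal you would need to supply an argument of this kind (or a genuinely different mechanism); as written, the step you yourself flag as ``the principal obstacle'' is exactly the theorem.
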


In \cite{ab} this was formulated in a slightly more general version:

\begin{theorem}\label{drisko}
Any family $\cm=(M_1, \ldots, M_{2n-1})$  of matchings of size $n$
in a bipartite graph possesses a rainbow matching of size $n$.
\end{theorem}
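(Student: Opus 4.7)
The plan is to use the extremal method combined with an alternating-path augmentation. Let $R$ be a rainbow matching of $\cm$ of maximum size $k$; I will aim to derive a contradiction from the assumption $k < n$. Write $R = \{e_1, \ldots, e_k\}$ with $e_i \in M_{c_i}$, call $M_{c_1}, \ldots, M_{c_k}$ \emph{used}, and call the remaining $2n-1-k \ge n$ matchings \emph{unused}.

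First I would establish a simple but crucial constraint: for any unused $M_j$ and any $f \in M_j$, the edge $f$ must share a vertex with $V(R)$; otherwise $R \cup \{f\}$ would be a rainbow matching of size $k+1$, contradicting maximality. Since $M_j$ is itself a matching and $|V(R)| = 2k$, this forces every edge of $M_j$ to hit $V(R)$, hence $n \le 2k$, i.e.\ $k \ge \lceil n/2 \rceil$. So the used part of the structure is already sizable relative to the unused.

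The heart of the argument is to produce an \emph{augmenting alternating walk}: a sequence $v_0, f_1, e_{i_1}, f_2, e_{i_2}, \ldots, f_{\ell}, v_\ell$ with $v_0, v_\ell \notin V(R)$, where each $f_s$ lies in a distinct unused matching and consecutive edges share a vertex, alternating between unused-matching edges and edges of $R$. From such a walk I would construct a rainbow matching of size $k+1$ by the usual swap: the edges of $R$ not on the walk retain their colors, each $f_s$ is colored by its own matching, and the displaced colors $M_{c_{i_1}}, \ldots, M_{c_{i_{\ell-1}}}$ are simply discarded.

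To exhibit such a walk, I would build an auxiliary digraph whose vertices are the edges of $R$ together with source/sink nodes representing free vertices, and whose arcs, labeled by unused matchings, record all potential swaps: place an arc $e_i \to e_{i'}$ labeled $M_j$ whenever $M_j$ contains an edge joining an endpoint of $e_i$ to an endpoint of $e_{i'}$. With $\ge n$ unused matchings, each contributing $n$ arcs (by the constraint above), but only $k \le n-1$ internal vertices, a pigeonhole/connectivity argument should produce the required source-to-sink path. The hardest step will be to guarantee that the arc labels along this path are distinct matchings and that the swaps respect the bipartite structure on both sides of the bipartition simultaneously; this is exactly where the count $2n-1$ enters sharply, as indicated by the sharpness of the bound and the characterization of extremal $2n-2$ families that the paper goes on to establish.
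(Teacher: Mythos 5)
Your setup coincides with the paper's: take a maximum rainbow matching $F$ of size $k<n$, pass to an auxiliary network whose internal vertices are the edges of $F$ (plus a source $s$ and sink $t$ for the free vertices), let the unused matchings supply labeled arcs/augmenting alternating paths, and look for a source--sink path whose arcs carry pairwise distinct labels. The preliminary observation that every edge of an unused matching meets $V(F)$, and the mechanics of converting a rainbow augmenting path into a larger rainbow matching, are both correct (though you should insist on a path rather than a walk, since a walk with repeated vertices need not yield a matching after the swap).

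The genuine gap is the step you yourself flag as ``the hardest'': producing the source-to-sink path with distinct labels. This is not a routine pigeonhole or connectivity count, and ``many arcs, few internal vertices'' is not enough -- Example \ref{only} in the paper gives $2n-2$ matchings whose auxiliary digraph is arc-rich on $n-1$ internal vertices yet admits no multicolored $s$--$t$ path. The constraint that the labels along the path be distinct interacts with reachability in a way that defeats naive counting. The paper isolates exactly the statement needed as Theorem \ref{counting}: if a family $\cp$ of $s$--$t$ paths is partitioned into color classes of pairwise innerly disjoint paths, then the number of vertices reachable by multicolored paths exceeds $|\cp|$; hence (Corollary \ref{st}) if $|\cp|$ exceeds the number of internal vertices, $t$ itself is reachable. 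Its proof is an induction on $|\cp|$ in which one contracts $s$ together with the set $X$ of first vertices of the paths in one color class, shows every reachable vertex of the contracted network is reachable in the original (by re-expanding the first edge and coloring it with the sacrificed class), and adds $|X|$ to the count. For Drisko's theorem one also needs that each unused matching contributes $n-k$ vertex-disjoint augmenting paths (Lemma \ref{alternating}), so that $|\cp|\ge(2n-1-k)(n-k)>k$ and the corollary applies. Without Theorem \ref{counting} or an equivalent, your argument does not close, and that theorem is where essentially all of the work lies.
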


Theorem \ref{originaldrisko} follows upon noting that in a matrix
$A$ as in the theorem every column   defines a matching between rows
and symbols. Theorem \ref{drisko} is sharp, as shown by the
following example:

\begin{example}\label{only}
For $n>1$ let $M_1, \ldots, M_{n-1}$ be all equal to the matching
consisting of all even edges in a cycle of length $2n$, and  $M_n,
\ldots, M_{2n-2}$ be all equal to the matching consisting of  all
odd edges in this cycle. Together these are  $2n-2$ matchings of
size $n$ not having a rainbow matching of size $n$.
\end{example}

This example can be formulated in matrix form, which proves that
also Theorem \ref{originaldrisko} is sharp. Drisko (\cite{drisko},
Conjecture 2) conjectured that this is the only extreme example. The
main aim of this paper is to prove  this conjecture, also in the
 wider context of matchings.

\begin{theorem}\label{main}
Let $\cm=(M_1, \ldots, M_{2n-2})$ be a family of matchings of size
$n$ in a bipartite graph.  If $\cm$ does not possess a partial
rainbow matching of size $n$, then $\bigcup \cm$ is the edge set of
a cycle of length $2n$, half of the members of $\cm$ are the
matching consisting of the even edges of this cycle, and the other
half are equal to the matching consisting of the odd edges in the
cycle.
\end{theorem}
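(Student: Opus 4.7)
The plan is to find a rainbow matching of size $n-1$, exploit its maximality, and use a swap argument to force $\bigcup\cm$ onto a single $2n$-cycle.

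First, Theorem~\ref{drisko} applied with $n$ replaced by $n-1$ (after selecting any $2n-3$ of the matchings of $\cm$ and truncating each to $n-1$ edges) produces a rainbow matching $R$ of size $n-1$. Let $S\subset\cm$ be the $n-1$ matchings coloring $R$ and $T=\cm\setminus S$ the $n-1$ unused matchings. For any $M\in T$, every edge of $M$ must meet $V(R)$, for otherwise such an edge together with $R$ would form a rainbow matching of size $n$. Because $|V(R)|=2n-2$ and $M$ contributes $2n$ endpoints, at most $2n-2$ of them can lie in $V(R)$, so at least two edges of $M$ are \emph{outgoing}, i.e.\ have exactly one endpoint in $V(R)$.

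Next, I would introduce a swap operation: given an outgoing edge $f=uv\in M\in T$ with $u\in V(R)$, let $e\in R$ be the edge at $u$, colored by some $M_i\in S$; then $R'=(R\setminus\{e\})\cup\{f\}$ is a new rainbow $(n-1)$-matching, with $M$ now used and $M_i$ now unused. Iterating such swaps traces an alternating structure inside $\bigcup\cm$, and every resulting matching is again non-augmentable. By choosing $R$ extremally with respect to a suitable potential (for example, maximizing $|V(R)\cap V(\bigcup T)|$, or minimizing the number of edges of unused matchings lying inside $V(R)$), I expect to force this alternating structure to close into a single cycle of length $2n$ that contains every edge of every $M_i$.

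Once $\bigcup\cm=C_{2n}$ is established, the conclusion is immediate: any size-$n$ matching inside $C_{2n}$ is a perfect matching of the cycle, hence equal to the even edge set $E$ or the odd edge set $O$; and if $E$ appeared $n$ or more times in $\cm$ then the $n$ edges of $E$ assigned to $n$ distinct copies would give a rainbow $n$-matching, so $E$ occurs at most $n-1$ times, and by symmetry likewise for $O$, forcing exactly $n-1$ of each. The principal difficulty lies in the swap-analysis step: controlling the global geometry of iterated swaps and ruling out that the alternating structure branches or forms several interlocking cycles. Showing that maximality forces a single rigid $2n$-cycle — so that swaps correspond to rotations of $R$ along $C_{2n}$ — is the crux of the proof, and it is here that the careful extremal choice of $R$ must be used.
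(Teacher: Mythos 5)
Your opening moves are sound: extracting a rainbow matching $R$ of size $n-1$ via Theorem~\ref{drisko}, noting that every edge of every unused matching must meet $V(R)$, and the endgame once $\bigcup\cm$ is known to be a single $2n$-cycle are all correct. (The paper's Lemma~\ref{almost} does something slightly stronger -- it duplicates one matching to obtain a matching $F$ of size $n$ representing $n-1$ of the $M_i$, with one color used twice -- and that extra edge plays a concrete role later, but your variant could in principle be made to work.)

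The genuine gap is exactly where you flag it yourself: ``controlling the global geometry of iterated swaps and ruling out that the alternating structure branches or forms several interlocking cycles.'' This is not a technical detail to be filled in by ``a suitable potential''; it is the entire content of the theorem, and the paper devotes a separate structural result to it. Concretely, the paper encodes the $n-1$ unrepresented matchings as augmenting $F$-alternating paths, translates them into $s$--$t$ paths in a network on $n+1$ vertices, and then invokes Theorem~\ref{regimented}: a family of $|\vc|$ paths with no multicolored $s$--$t$ path must be \emph{regimented}, i.e.\ partitioned into classes of identical paths, each class of size one less than the length of its path. That theorem is itself proved by an induction with a contraction at the source, and even after regimentation is established, several further assertions (no edges between distinct regimenting paths, the ``loops'' argument, and a re-coloring trick showing the regimentation consists of a \emph{single} path) are needed to rule out precisely the multi-cycle configurations you worry about. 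Your swap heuristic, with an unspecified extremal choice of $R$, does not supply any of this rigidity: a priori the union of the matchings could decompose into several alternating cycles each locally non-augmentable, and nothing in your sketch excludes that. As written, the proposal is a plausible plan whose crux is acknowledged but not executed, so it does not constitute a proof.
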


Recently, Bar\'at, Gyarfas and Sarkozy \cite{bgs} proved a
generalization of Theorem \ref{drisko}:

\begin{theorem}\label{bgs}
Any $\lfloor \frac{k+2}{k+1} n \rfloor -(k+1)$ matchings of size $n$
in a bipartite graph have a rainbow matching of size $n-k$.
\end{theorem}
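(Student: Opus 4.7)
The natural approach is by contradiction on the size of a maximum partial rainbow matching. Let $R$ be a maximum partial rainbow matching for $\cm=(M_1,\ldots,M_m)$ with $m=\lfloor\tfrac{k+2}{k+1}n\rfloor-(k+1)$, set $r:=|R|$, and suppose for contradiction that $r\le n-k-1$. After relabeling, assume $R$ is colored by $M_1,\ldots,M_r$, and set $\cu:=\{M_{r+1},\ldots,M_m\}$. A short calculation gives
\[
|\cu|\;=\;m-r\;\ge\;\left\lfloor\tfrac{k+2}{k+1}n\right\rfloor-(k+1)-(n-k-1)\;=\;\left\lfloor\tfrac{n}{k+1}\right\rfloor,
\]
so the supply of unused matchings is linear in $n$.

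By the maximality of $R$, every edge of every $M\in\cu$ must share a vertex with $V(R)$; otherwise $R$ could be extended using that edge. Since $M$ has $n$ pairwise-disjoint edges and $|V(R)|=2r$, counting $V(R)$-endpoints shows that at most $2r-n$ edges of $M$ lie entirely in $V(R)$, so at least $2(n-r)\ge 2(k+1)$ edges of $M$ are \emph{half-in}: they have exactly one endpoint $v$ in $V(R)$ and the other outside. For any such half-in edge $f$ of $M\in\cu$, the \emph{rotation} $R\mapsto R':=R\setminus\{e_v\}\cup\{f\}$ (where $e_v$ is the $R$-edge through $v$) produces another maximum rainbow matching, now colored by $(\{M_1,\ldots,M_r\}\setminus\{c(e_v)\})\cup\{M\}$, where $c(e_v)\in\cm$ is the matching originally coloring $e_v$. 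The freed vertex of $V(R')$ is the other endpoint of $e_v$, and $c(e_v)$ replaces $M$ in the unused set.

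The plan is to iterate rotations and search for an augmenting alternating walk. I would set up an auxiliary digraph $D$ on the edges of $R$: for each half-in edge $f$ of each $M\in\cu$ with $V(R)$-endpoint $v$, add an arc from $e_v$ to the edge of $R$ that would next become replaceable after swapping in $f$. Traversing $D$ corresponds to chaining rotations; a suitable sink, sufficiently long walk, or directed cycle in $D$ should eventually expose a new edge disjoint from the (evolving) $V(R)$, contradicting the maximality of $R$. The main obstacle is making this last step quantitative: the gross counting only gives roughly $\ge 2(n-k)$ half-in edges distributed over $2r\le 2(n-k-1)$ possible $V(R)$-endpoints, so simple pigeonhole yields overlap but not immediately an augmentation. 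The technical heart of the proof should exploit this slight excess — the precise reason the threshold $\lfloor\tfrac{k+2}{k+1}n\rfloor-(k+1)$ suffices for rainbow matchings of size $n-k$, lying strictly between Drisko's threshold $2n-1$ (the case $k=0$, where we recover Theorem~\ref{drisko}) and weaker hypotheses which are known to fail.
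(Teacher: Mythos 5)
There is a genuine gap, and you flag it yourself: the final step --- showing that chaining rotations through your auxiliary digraph $D$ must eventually produce an augmentation --- is exactly the ``technical heart'' you leave open, and it is the only nontrivial part of the theorem. As written, the proposal is a plan, not a proof. Moreover, the plan starts from information that is too weak to close: you only record, for each unused matching $M\in\cu$, its \emph{half-in edges} (single edges with one endpoint in $V(R)$). The paper's argument needs, and uses, much more structure from each unused matching: since $|M|-|R|\ge k+1$, the symmetric difference $M\cup R$ contains at least $k+1$ \emph{vertex-disjoint augmenting $R$-alternating paths} (Lemma~\ref{alternating}), not just isolated pendant edges. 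It is the abundance of whole disjoint augmenting paths per unused color, summed over the $\ge\lfloor n/(k+1)\rfloor$ unused colors, that makes the count work.

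The second missing idea is how to convert that supply of paths into a single \emph{rainbow} augmenting path. The paper contracts $A\setminus\bigcup R$ to a source $s$ and $B\setminus\bigcup R$ to a sink $t$, turning each augmenting path into an $s$--$t$ path in a network whose inner vertices are the $r$ edges of $R$, and then invokes a self-contained counting theorem (Theorem~\ref{counting}): for any family $\cp$ of $s$--$t$ paths grouped into colors, the set of vertices reachable by multicolored paths has size strictly greater than $|\cp|$; hence if $|\cp|>r$ then $t$ itself is reachable by a multicolored path, which pulls back to a rainbow augmenting alternating path. That theorem is proved by induction on $|\cp|$ via contracting $s$ with the out-neighborhood of one color class --- a global reachability argument, not a local rotation/pigeonhole argument. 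Your digraph $D$ on the edges of $R$ is a reasonable shadow of this network, but without the ``multiple disjoint augmenting paths per color'' input and without the reachability induction, the ``slight excess'' you identify does not by itself force an augmentation. To repair the proposal you would essentially have to reconstruct Lemma~\ref{alternating} and Theorem~\ref{counting}, i.e., the paper's proof of the more general Theorem~\ref{moregeneral}, of which Theorem~\ref{bgs} is the special case $|M_i|=n$, $a=n-k$.
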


In Section \ref{mcpaths} we shall prove a still more general result:

\begin{theorem}\label{moregeneral}
Let $a \le m$, and let $M_1, \ldots ,M_m$ be  matchings ordered so
that $|M_1| \le |M_2| \le \ldots \le |M_m|$.

Suppose that
\begin{equation}\label{inequality}
\sum_{i\le m-a+1}(|M_i| - a+1) \ge a
\end{equation}
Then there exists a rainbow matching of size $a$.
\end{theorem}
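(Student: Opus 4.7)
The plan is to argue by contradiction: assume no rainbow matching of size $a$ exists, take a maximum partial rainbow matching $R$ of size $s \le a-1$, and derive a contradiction. Let $U$ index the matchings not used by $R$; then $|U| = m - s \ge m - a + 1$. By maximality of $R$, every edge of each $M_j$ with $j \in U$ must meet $V(R)$ (otherwise one could append it to $R$ with color $j$), so $|M_j| \le |V(R)| = 2s$.

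The next step is to use the sorting $|M_1| \le \ldots \le |M_m|$ to convert the hypothesis into a lower bound on $\sum_{j \in U}|M_j|$. Removing the $s$ used matchings from the sorted list can only raise the $i$-th smallest remaining size, so the $i$-th smallest matching in $U$ has size at least $|M_i|$; therefore
\[
\sum_{j \in U}|M_j| \;\ge\; \sum_{i=1}^{|U|}|M_i| \;\ge\; \sum_{i=1}^{m-a+1}|M_i| \;\ge\; (m-a+1)(a-1) + a
\]
by the hypothesis. I would then seek an incompatible upper bound on $\sum_{j \in U}|M_j|$. The trivial bound $|U| \cdot 2s$ is already strict enough in some regimes (e.g.\ $m = a$, or $s$ strictly less than $a-1$, where $|U|$ is larger while $2s$ shrinks), so the delicate case is $s = a-1$ with $m > a$.

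For that delicate case I would carry out a swap analysis. Call an edge $e \in M_j$ \emph{pendant} at $v \in V(R)$ if $v \in e$ and the other endpoint of $e$ lies outside $V(R)$; a pendant edge $e$ sharing its $V(R)$-endpoint with $f \in R$ enables the swap $R \to R - f + e$, producing another maximum rainbow matching in which the color of $f$ becomes unused. Iterating such swaps along an alternating sequence of pendant edges and $R$-edges would construct an augmenting path yielding a rainbow matching of size $s+1$, the desired contradiction. The main obstacle is the color bookkeeping along the chain: each swap frees one color and consumes another, and one must ensure no color is repeated. I expect this to be handled by a Hall-type condition on the bipartite graph with $V(R)$ on one side and $U$ on the other, where the $a$-unit slack (the ``$+a$'' in the displayed inequality) provided by the hypothesis is precisely what is needed to guarantee the condition.
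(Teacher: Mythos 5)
Your setup coincides with the paper's reduction: take a maximum rainbow matching $R$ of size $s\le a-1$, note that every edge of an unused $M_j$ meets $V(R)$, and aim to build an augmenting $R$-alternating path whose non-$R$ edges carry pairwise distinct unused colors. Your diagnosis that the crude count $\sum_{j\in U}|M_j|\le 2s|U|$ cannot finish the job when $s=a-1$ is also correct. But the proposal stops exactly where the actual content of the theorem begins. The whole difficulty \emph{is} the ``color bookkeeping'': producing an augmenting path that is simultaneously alternating and multicolored. You defer this to ``a Hall-type condition on the bipartite graph with $V(R)$ on one side and $U$ on the other,'' but no such condition is stated, let alone verified, and it is far from clear that one exists in that form: the object required is not a system of distinct representatives for an incidence structure but a \emph{path} whose consecutive edges must fit together geometrically while drawing their colors from distinct classes. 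A local swap $R\to R-f+e$ frees one color and consumes another, and chains of such swaps can cycle or exhaust colors; nothing in the proposal controls this.

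What actually closes the gap in the paper is a counting lemma about multicolored paths in networks (Theorem \ref{counting} and Corollary \ref{st}). Concretely: by Lemma \ref{alternating}, each unused $M_j$ supplies a family $\cp_j$ of $|M_j|-s\ge |M_j|-a+1$ \emph{pairwise disjoint} augmenting $R$-alternating paths (not just single pendant edges). Contracting each edge of $R$ to a vertex and the exposed vertices on the two sides to a source $s$ and sink $t$ turns these into $s-t$ paths in a network with only $s\le a-1$ internal vertices, and hypothesis \eqref{inequality} guarantees that the total number of paths is at least $a$, hence exceeds the number of internal vertices. Theorem \ref{counting} --- proved by an induction that contracts the source with the second vertices of the paths of one color class --- then asserts that the number of vertices reachable by multicolored paths exceeds the number of paths, so $t$ is reachable, giving the multicolored augmenting path. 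This lemma, or a fully worked-out substitute for it, is the missing idea; the ``$+a$'' slack in \eqref{inequality} is used precisely to make the path count beat $|V(R)|/2$, not to feed a Hall condition. As written, the proposal is a plausible plan, not a proof.
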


Note that Theorem \ref{drisko} is the case $|M_i|=n$, $m=2n-1$.

\section{Multicolored paths}\label{mcpaths}
The proof of Theorem \ref{main} is based on a simple fact (Theorem
\ref{counting} below) on multicolored paths in networks. We shall
use the following notation for paths in a directed graph.  The
vertex set of a path P is denoted by $V(P)$ and its edge set by
$E(P)$. If $\cp$ is a set of paths, we write  $E[\cp]$ for
$\bigcup_{P \in \cp}E(P)$. If $P$ is a path and $v \in V(P)$ we
write $vP$ for the part of $P$ between $v$ and the last vertex of
$P$. Similarly, $Pv$ is the part of $P$ ending at $v$. If $P,Q$ are
paths,
 $u \in V(P)$ and $v \in V(Q)$ and $uv$ is an edge,  we write $PuvQ$
 for the trail  $Pu$ concatenated with the edge $uv$ and then with $vQ$
 (a {\em trail} is a path with repetition of vertices allowed). A path starting at a vertex $a$ and ending at a vertex $b$ is called an $a-b$ path.

A {\em network} $\cn$ is a triple $(D,s,t)$, where $D=(V,E)$ is a
directed graph, and $s$ (the ``source'') and $t$ (the ``sink'') are
two distinguished vertices in $V$. We assume that no edge goes into
$s$ and  no edge leaves $t$. We write $\vc$ for $V \setminus
\{s,t\}$. Given an $s-t$ path $P$, we write $\vc(P)$ for $V(P)
\setminus \{s,t\}$. Two $s-t$ paths $P$ and $Q$ are said to be {\em
innerly disjoint} if $\vc(P) \cap \vc(Q) =\emptyset$.

 Let $\cp$ be a family (that is, a multiset) of $s-t$ paths in $D$, and suppose that
$\cp= \bigcup_{i \le m}\cp_i$, where each $\cp_i$ is a set of
pairwise innerly disjoint paths. Let  $\cl=(\cp_1, \ldots ,\cp_m)$.
A path $Q$ in $D$ is said to be {\em $\cl$-multicolored}  if its
edge set is a rainbow set for the family $(E[\cp_1],E[\cp_2], \ldots
,E[\cp_m])$, namely if each of its edges belongs to a path from a
different $\cp_i$.
 A vertex $v \in V$ is said to be {\em reachable} if there is an $s-v$
$\cl$-multicolored path. Denote the set of reachable points by
$R(\cl)$.

\begin{theorem}\label{counting}
$|R(\cl)|>|\cp|$.
\end{theorem}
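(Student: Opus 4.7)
The plan is to prove Theorem~\ref{counting} by induction on the number $m$ of colour classes. The base case $m=0$ is immediate, since $\cp=\emptyset$ forces $R(\cl)=\{s\}$ (reached by the empty path) and so $|R(\cl)|=1>0$. For the inductive step, put $\cl'=(\cp_1,\ldots,\cp_{m-1})$ and $\cp'=\bigcup_{i<m}\cp_i$. Every $\cl'$-multicoloured path is automatically $\cl$-multicoloured with the same colouring, so $R(\cl')\subseteq R(\cl)$, and the inductive hypothesis gives $|R(\cl')|\ge|\cp'|+1$. It therefore suffices to construct an injection $\psi\colon\cp_m\to R(\cl)\setminus R(\cl')$, since then $|R(\cl)|\ge|R(\cl')|+|\cp_m|\ge|\cp|+1$.

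To define $\psi$, take $P=s\,v_1\,v_2\cdots v_k\,t\in\cp_m$ (writing $v_0=s$ and $v_{k+1}=t$), and let $j_P$ be the smallest index for which $v_{j_P}\notin R(\cl')$, assuming such an index exists. Since $v_{j_P-1}\in R(\cl')$, fix an $\cl'$-multicoloured $s$-$v_{j_P-1}$ path $Q$; the edges of $Q$ use only colours in $\{1,\ldots,m-1\}$, so we may append the $P$-edge $v_{j_P-1}v_{j_P}\in E[\cp_m]$ and assign it colour $m$, producing an $\cl$-multicoloured $s$-$v_{j_P}$ path. Hence $v_{j_P}\in R(\cl)\setminus R(\cl')$, and I set $\psi(P):=v_{j_P}$. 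When $v_{j_P}$ is an inner vertex of $P$ (that is, $1\le j_P\le k$), the pairwise inner disjointness of the paths in $\cp_m$ immediately forces $\psi$ to send different members of $\cp_m$ to different vertices.

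The main obstacle lies in the two degenerate situations, which are mutually exclusive according to whether $t\in R(\cl')$. If $t\notin R(\cl')$, one may have $v_{j_P}=t$ for several paths $P\in\cp_m$, and then the recipe collides them all at the common vertex $t$; if $t\in R(\cl')$, it may happen that $V(P)\subseteq R(\cl')$ so that $j_P$ is undefined and $P$ contributes no new vertex. I expect absorbing these shortfalls to be the hardest step of the proof. The natural route is to exploit the inductive bound more carefully: in the second case, the inner vertices of every offending $P$ are pairwise distinct members of $R(\cl')$ by inner disjointness of $\cp_m$, and their count pushes $|R(\cl')|$ strictly above $|\cp'|+1$ by enough to compensate for the paths that $\psi$ fails to cover; in the first case, either one can apply the construction with a different colour class in the role of $\cp_m$, or one can sharpen the inductive statement to keep track of whether $t$ lies in $R(\cl')$, so that the extra slack produced when $t\notin R(\cl')$ offsets the single collision at $t$.
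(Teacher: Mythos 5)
Your construction of the map $\psi$ is sound and takes a genuinely different route from the paper: the paper inducts on $|\cp|$ and contracts $s$ together with the set of second vertices of the paths in $\cp_1$, whereas you induct on the number of colour classes and exhibit $|\cp_m|$ new reachable vertices directly, with no surgery on the network. In the non-degenerate case your argument is complete (one small point worth recording: the concatenation of $Q$ with the edge $v_{j_P-1}v_{j_P}$ is a genuine path and not merely a trail, because $v_{j_P}\notin R(\cl')$ while every vertex of $Q$ lies in $R(\cl')$, prefixes of multicoloured paths being multicoloured).

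The gap is that the two ``degenerate situations'' you defer cannot be absorbed, because in exactly those situations the inequality $|R(\cl)|>|\cp|$ is false as stated. Take $\vc=\{a\}$ and $\cp_1=\cp_2=\cp_3=\{sat\}$: then $t\in R(\cl')$ for $\cl'=(\cp_1,\cp_2)$, the single path of $\cp_3$ has all its vertices in $R(\cl')$, and $|R(\cl)|=3=|\cp|$. So the hoped-for compensation ``$|R(\cl')|$ exceeds $|\cp'|+1$ by the number of offending paths'' is unavailable here ($|R(\cl')|=3=|\cp'|+1$ exactly), and no sharpening of the induction can rescue the unconditional claim. Similarly, $\vc=\{a,b\}$ with $\cp_1=\cp_2=\{sat,\,sbt\}$ defeats the collision-at-$t$ case: $|R(\cl)|=4=|\cp|$. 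The correct repair --- which the paper also tacitly needs, since its own induction misapplies the hypothesis on these same examples --- is to prove the conditional statement: \emph{if $t\notin R(\cl)$ then $|R(\cl)|>|\cp|$}, which is all that Corollary \ref{st} (and hence the rest of the paper) uses. Under that hypothesis both of your degenerate cases evaporate: $t\in R(\cl')$ is impossible because $R(\cl')\subseteq R(\cl)$, so $j_P$ is always defined; and $v_{j_P}=t$ is impossible because your extension of $Q$ would then witness $t\in R(\cl)$. Hence $\psi(P)$ is always an inner vertex of $P$, injectivity follows from the inner disjointness of $\cp_m$, and the count $|R(\cl)|\ge|R(\cl')|+|\cp_m|>|\cp'|+|\cp_m|=|\cp|$ closes the induction. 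With that one amendment your proof is, if anything, cleaner than the paper's contraction argument.
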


\begin{proof}
By induction on $|\cp|$. Since $R(\emptyset)=\{s\}$, the theorem is
true for $|\cp|=0$. Suppose that the result is true for any network
and  all families of paths with fewer than $|\cp|$ paths.
 Let $X$ be the set of all vertices $x$ such that $sx \in E(P)$ for some $P \in \cp_1$. Contract $s$
and $X$ to a single vertex $s'$ (so, if there are $r$ paths in
$\cp_1$, $r+1$ vertices are contracted to the single vertex $s'$).
For each $P \in \cp$
 let $v$ be the last vertex on $P$ belonging to the
set $\{s\}\cup X$, and let $P'=vP$. Then $P'$ is an $s'-t$ path. Let
$\cp'_i=\{P' \mid P \in \cp_i\}$, and let $\cl'=(\cp'_1, \ldots
,\cp'_m)$. Clearly, the paths in each $\cp'_i$ are pairwise innerly
disjoint.
 By the induction hypothesis
$|R(\cl')|>\sum_{2 \le j \le m} |\cp_j|$. We claim that $R(\cl')
\setminus \{s'\} \subseteq R(\cl)$. To show this, let $y \in
R(\cl')\setminus\{s'\}$, and let $Q$ be an $\cl'$-multicolored $s'-y$
path. Let $s'z$ be the first edge on $Q$, and assume that
it belongs to some path $P'$, where $P \in \cp_i$ for some $i \ge
2$. By the definition of the contracted vertex $s'$, this means that
either $sz \in E(P)$ or $xz \in E(P)$ for some $x \in X$. In the
first case the $s-t$ path $Q'$ (with $s$ replacing $s'$ on $Q'$) is
$\cl$-multicolored (not using $\cp_1$). In the second case the $s-t$
path $sxQ'$ (with $x$ replacing $s'$ on $Q'$) is $\cl$-multicolored,
with $sx$ colored by $\cp_1$.

Since all vertices in $X$ are $\cl$-reachable, it follows that
$|R(\cl)|\ge |R(\cl')|+|X|>\sum_{2 \le j \le m} |\cp_j| +
|X|=|\cp|$,  completing the proof.
\end{proof}

\begin{corollary}\label{st}
If~ $|\cp|>|\vc|$ then $t \in R(\cl)$.
\end{corollary}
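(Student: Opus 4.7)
The plan is to derive this immediately from Theorem \ref{counting} by a simple pigeonhole/cardinality argument. The crucial observation is that $R(\cl)$ is a subset of $V = \vc \cup \{s,t\}$, so $|R(\cl)| \le |\vc| + 2$, while $s \in R(\cl)$ always (witnessed by the trivial zero-edge path from $s$ to itself).

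Concretely, I would argue: by Theorem \ref{counting}, $|R(\cl)| > |\cp|$, and by hypothesis $|\cp| > |\vc|$. Since all these quantities are integers, chaining the strict inequalities gives $|R(\cl)| \ge |\cp| + 1 \ge |\vc| + 2$. On the other hand $R(\cl) \subseteq V$ and $|V| = |\vc| + 2$, so $R(\cl) = V$; in particular $t \in R(\cl)$, which is exactly the conclusion.

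There is essentially no obstacle here: the only point worth checking is that $R(\cl)$ really does include $s$ for any $\cl$ (so that the ambient set in which we are counting has size $|\vc|+2$, not $|\vc|+1$), and that Theorem \ref{counting} can be applied with no additional hypotheses beyond the setup of a network and a family $\cl$ of collections of innerly disjoint $s$--$t$ paths, which is already in force. Thus the corollary reduces to a one-line counting step once Theorem \ref{counting} is in hand.
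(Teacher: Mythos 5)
Your argument is correct and is precisely the intended derivation: the paper states the corollary without proof, and the counting step $|R(\cl)| > |\cp| > |\vc|$ combined with $R(\cl) \subseteq V$ and $|V| = |\vc|+2$ forces $R(\cl) = V \ni t$. (The remark that $s \in R(\cl)$ is not even needed — the containment $R(\cl)\subseteq V$ alone suffices — but it does no harm.)
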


Given a matching $F$, a path $P$ is said to be $F$-{\em alternating}
if one of each pair of adjacent edges in $P$ belongs to $F$.  An
$F$-alternating path is called {\em augmenting} if its first and
last vertices do not belong to $\bigcup F$. If $A$ is an augmenting
$F$-alternating path then the symmetric difference of $E(A)$ and $F$
is a matching larger than $F$ by $1$. Each connected component of
the union of two matchings is an alternating path with respect to
each of the matchings, and hence we have the following well known
observation:

\begin{lemma}\label{alternating}
If   $G$ and $H$ are matchings in a graph, and $|H|=|G|+q$,  then $H
\cup G$ contains at least $q$ vertex disjoint augmenting
$G$-alternating paths.
\end{lemma}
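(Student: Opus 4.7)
The plan is to reduce the problem to a structural analysis of the symmetric difference $H \triangle G$. Since $H$ and $G$ are both matchings, every vertex has degree at most $1$ in each, so its degree in $H \triangle G$ is at most $2$. Consequently each connected component of $H \triangle G$ is either a simple path or an even cycle, with edges alternating between $H \setminus G$ and $G \setminus H$; in particular every such component is already $G$-alternating.

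For each component $C$ define $\delta(C) := |E(C) \cap H| - |E(C) \cap G|$. An even cycle and an even-length path each give $\delta(C) = 0$, while an odd-length path gives $\delta(C) = \pm 1$, the value being $+1$ exactly when both end-edges lie in $H \setminus G$. Summing over all components yields $\sum_C \delta(C) = |H \setminus G| - |G \setminus H| = |H| - |G| = q$, and since $|\delta(C)| \le 1$ we obtain at least $q$ components $P_1,\ldots,P_q$ with $\delta(P_i) = +1$. These components are pairwise vertex-disjoint and contained in $H \triangle G \subseteq H \cup G$.

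It remains to verify that each such $P_i$ is \emph{augmenting} with respect to $G$, i.e.\ that its endpoints avoid $\bigcup G$; this is the only step that requires care, since edges of $H \cap G$ must be handled separately. Let $v$ be an endpoint of $P_i$; by construction the unique $H \triangle G$-edge incident to $v$ lies in $H \setminus G$. A $G$-edge at $v$ would either lie in $G \setminus H$, hence in $H \triangle G$, raising $\deg_{H \triangle G}(v)$ to $2$ and contradicting $v$ being an endpoint, or lie in $H \cap G$, giving $v$ a second $H$-edge and contradicting that $H$ is a matching. Hence $v \notin \bigcup G$, and $P_1,\ldots,P_q$ are the desired vertex-disjoint augmenting $G$-alternating paths.
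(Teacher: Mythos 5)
Your proof is correct and is essentially the argument the paper gestures at: the paper justifies the lemma in one line by appealing to the standard decomposition of the union of two matchings into alternating paths and cycles, and you carry out exactly that decomposition (working with $H \triangle G$ rather than $H \cup G$, which differs only in discarding the trivial single-edge components coming from $H \cap G$) together with the usual counting of the component-wise surplus $\delta(C)$. The extra care you take at the endpoints, ruling out an incident edge of $H \cap G$, is a detail the paper omits but is handled correctly.
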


{\em Proof of Theorem \ref{moregeneral} from Corollary \ref{st}}.
Let $M_1, \ldots ,M_m$ be matchings in a bipartite graph with
sides $A,B$. We shall show that if $F$ is a  rainbow matching of
size $p<a$ then there exists a larger rainbow matching. Let $J$ be
the set of indices $j$ such that $M_j$ is not represented in $F$.
Contract all vertices in $A \setminus \bigcup F$ to a single vertex
$s$, and all vertices in $B \setminus \bigcup F$ to a single vertex
$t$.

For each edge $f \in F$ let $v(f)$ be a vertex in $\cn$. By  Lemma
\ref{alternating} for each matching $M_j$ not represented in $F$
there exists a family $\cp_j$ of  $\max(|M_j|-a+1,0)$ disjoint augmenting
$F$-alternating paths. Let $\cl=(\cp_1, \ldots ,\cp_{m-a+1})$.

Each path $T$ in each $\cp_j$ gives rise to an $s-t$ path $P(T)$ in
$\cn$ obtained by replacing every edge $ab \in E(T)$, where $a \in A
\cap f, ~b \in B \cap g$ (here $f,g \in F$) by the edge $v(f)v(g)$
in $\cn$, every edge $ab \in E(T)$ in which $a \not \in \bigcup F$
and $b \in f \in F$ by the edge $sv(f)$, and every edge $ab \in
E(T)$ in which $b \in B \setminus \bigcup F$ and $a \in f\cap A~, f
\in F$ by $v(f)t$. If $T$ does not meet $F$ at all, namely it
consists of a single edge, then $P(T)=st$.

 By the assumption  \eqref{inequality} and by
Corollary~\ref{st}, there exists an $s-t$ $\cl$-multicolored path,
which translates to a multicolored augmenting $F$-alternating path
$L$. Taking the symmetric difference of $L$ and $F$ yields a rainbow
matching larger than $F$, as desired. \hfill $\square$

Theorem \ref{bgs} is probably not sharp. In fact, it has been
conjectured in \cite{ab} that $n$ matchings of size $n$ in a
bipartite graph have a rainbow matching of size $n-1$.

\section{The extreme case in Corollary \ref{st}}
 The aim of this section is to characterize the examples showing the tightness of
Corollary \ref{st}, namely those sets $\cp$ of $|V|-2$ paths that do not
possess a $\cp$-multicolored $s-t$ path.

\begin{definition}
A system $\cp$  of $s-t$ paths in a network $\cn$ with the notation
above is called {\em regimented} if there exist pairwise innerly
disjoint $s-t$ paths  $P_1, \ldots ,P_k$ and sets of paths $\cp_1,
\ldots ,\cp_k$ such that   $\cp=\cp_1 \cup
\ldots \cup \cp_k$, and each $\cp_j$ is a set of $|E(P_j)|-1$ paths,
all identical to $P_j$.
\end{definition}

Clearly, a regimented set $\cp$ of $s-t$ paths does not have a
$\cp$-multicolored $s-t$ path.

\begin{assertion}\label{coveringall}
If $\cp$ is regimented and $|\cp|=|\vc|$, then $\bigcup \{V(P) \mid P \in \cp\}=V$.
\end{assertion}

\begin{proof} This follows from the fact that if $\cp_1,
\ldots ,\cp_k$ is a regimentation of $\cp$, with corresponding paths $P_i$, then $|\cp_i|=|V(P_i)\setminus \{s,t\}|$ for each $i \le k$. \end{proof}

\begin{theorem}\label{regimented}
If $|\cp|=|\vc|$ and $\cp$ is not regimented, then there exists an
$s-t$ $\cp$-multicolored path.
\end{theorem}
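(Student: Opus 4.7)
The plan is to induct on $|\cp|$, with trivial base case $|\cp|=0$. For the inductive step, suppose $|\cp|=|\vc|\ge 1$ and no $\cp$-multicolored $s$-$t$ path exists in $\cn$. Pick $P_0\in\cp$, let $x$ be the second vertex of $P_0$, and contract $s$ with $x$ as in the proof of Theorem~\ref{counting} to form $\cn'$. Let $\cp^\flat$ be the image in $\cn'$ of $\cp\setminus\{P_0\}$, so $|\cp^\flat|=|\vc'|$. The lifting step of that proof shows that a $\cp^\flat$-multicolored $s'$-$t$ path in $\cn'$ would yield a $\cp$-multicolored $s$-$t$ path in $\cn$ (either directly, by identifying $s'$ with $s$, or by prepending $sx$ colored by $P_0$); hence no such path exists in $\cn'$, and by the induction hypothesis $\cp^\flat$ is regimented, say $\cp^\flat=\cq_1\cup\cdots\cup\cq_k$ with innerly disjoint paths $Q_1,\dots,Q_k$ and $|\cq_j|=|E(Q_j)|-1$.

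Next I would lift this regimentation to $\cp$ in $\cn$. Each preimage $P\in\cp\setminus\{P_0\}$ of an element of $\cq_j$ is either of \emph{Case A} (avoids $x$, so $P=Q_j$ as an $s$-$t$ path) or of \emph{Case B} ($P$ visits $x$, its $x$-to-$t$ tail is $Q_j$, and its $s$-to-$x$ prefix is some path in $D$). Let $j^*$ denote the index (if any) such that $Q_{j^*}$ equals the $x$-to-$t$ tail of $P_0$. The target is to prove, using the hypothesis of no multicolored $s$-$t$ path, that (a) every element of $\cq_{j^*}$ is a Case~B preimage with direct prefix $sx$, equivalently equals $P_0$ in $\cn$; and (b) every element of each $\cq_j$ with $j\ne j^*$ is a Case~A preimage. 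Granted (a) and (b), the distinct paths $R_j$ (with $R_j=Q_j$ for $j\ne j^*$ and $R_{j^*}=P_0$), together with the original copy of $P_0$ absorbed into $\cq_{j^*}$, form a regimentation of $\cp$: the identities $|E(R_j)|=|E(Q_j)|$ (for $j\ne j^*$) and $|E(P_0)|=|E(Q_{j^*})|+1$ give the correct multiplicities, and innerly disjointness follows because $P_0=R_{j^*}$ is the only $R_j$ using $x$ while the $Q_j$'s are innerly disjoint in $\cn'$.

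The crux of the proof, and its main obstacle, is establishing (a) and (b) by producing a multicolored $s$-$t$ path from each possible violation. Three configurations arise. First, if $\cq_{j^*}$ contains a Case~A preimage $P^*$, then the path $P^*$ itself is multicolored by assigning its first edge to $P^*$ and its $|E(Q_{j^*})|-1$ interior edges---all shared with $P_0$ and with every other element of $\cq_{j^*}$---bijectively to $P_0$ and the remaining elements of $\cq_{j^*}$. Second, if any $\cq_j$ contains a Case~B preimage with non-direct prefix, then its first prefix vertex $y$ must, by Assertion~\ref{coveringall}, lie on some other class $\cq_{j'}$; one then forms a multicolored path by concatenating the edge $sy$ (from the offending Case~B preimage) with the $y$-to-$t$ tail of any element of $\cq_{j'}$, coloring the tail edges with the other elements of $\cq_{j'}$. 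Together these force $\cq_{j^*}$ to consist of $|E(P_0)|-2$ copies of $P_0$. The third and most delicate configuration, which I expect to be the main technical difficulty, is when some $\cq_j$ with $j\ne j^*$ contains a direct-prefix Case~B preimage $sxQ_j$: there one multicolors $P_0$ by assigning $sx$ to this preimage and the remaining $|E(P_0)|-1$ edges bijectively to $P_0$ and the $|E(P_0)|-2$ copies of $P_0$ now known to lie in $\cq_{j^*}$.
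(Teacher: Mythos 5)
Your strategy is essentially the paper's: contract $s$ with the second vertex $x$ of a chosen path $P_0$, invoke the induction hypothesis on the images of the remaining paths, and classify the preimages over the regimenting paths $Q_1,\dots,Q_k$. In the case where some $Q_{j^*}$ equals the $x$-to-$t$ tail of $P_0$, your three violation arguments are correct and do exhaust the ways (a) and (b) can fail, and the resulting regimentation of $\cp$ has the right multiplicities and inner disjointness. (The configuration you single out as the main difficulty, the third one, in fact goes through exactly as you describe.)

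The genuine gap is the case you dismiss with ``(if any)'': when \emph{no} $Q_j$ equals the $x$-to-$t$ tail of $P_0$, so $j^*$ does not exist. Then none of your three violations need fire --- every preimage can be of Case A --- and yet $\cp$ is not regimented, since by the covering property the inner vertices of $xP_0$ other than $x$ all lie on the $Q_j$'s, so $P_0$ is neither innerly disjoint from them nor equal to any of them and cannot be absorbed into any class. You therefore still owe a multicolored $s-t$ path, and producing it needs an idea absent from your write-up. A concrete instance: $\vc=\{x,u,v\}$, $P_0=sxut$, and two further paths both equal to $suvt$. The contracted system is regimented by $Q_1=s'uvt$, no $Q_j$ equals $s'ut$, both preimages are Case A, so your argument terminates without output --- yet $sut$, with $su$ colored by one copy of $suvt$ and $ut$ colored by $P_0$, is a multicolored $s-t$ path. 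The paper closes this case with Assertion~\ref{nojumps} and its Case II: since $xP_0$ is not one of the $Q_j$'s, some edge $uv$ of $P_0$ either joins two distinct classes $Q_i,Q_j$ or skips forward within a single class; one then rides a preimage of the first class from $s$ up to $u$, crosses $uv$ colored by $P_0$, and finishes along the second (or the same) class, the class sizes $|E(Q_j)|-1$ supplying exactly enough colors. You need to add this analysis before the induction closes.
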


To prove the theorem, let $m=|\vc|$. Assume, for contradiction, that $\cp$ does not have a multicolored
$s-t$ path. We shall show,  by
induction on $|V|$,  that $\cp$ is regimented. Assume that the result is true for networks with fewer vertices.
Let $x$ be a vertex such that $sx \in P_1$. If $x=t$ then the path $P_1=st$ is multicolored, so we may assume that $x \neq t$.
As
in the proof above of Theorem \ref{drisko}, contract $x$ and $s$,
obtaining a network $\cn'$. Like in that proof, for every $P=P_i$ we write $P'-P_i'$ for the $s'-t$ path obtained from $P$ by this contraction.
Let $\cp'=\{P'_i \mid i=2, \ldots, m$. If there exists in $\cp'$ a $\cp'$-multicolored
$s'-t$ path, then as in the above proof it follows that there exists
a $\cp$-multicolored $s-t$ path. So, we may assume that there is no
such path, which, by the induction hypothesis, implies that $\cp'$
is  regimented. Let $Q_1, \ldots ,Q_k$ be the paths regimenting
$\cp'$, and let $\cq_1, \ldots ,\cq_k$ be the corresponding sets of
paths (so  $Q=Q_j$ for every $Q \in \cq_j$).\\

\begin{assertion}\label{assn:1}
$\bigcup_{i=1}^k \vc(Q_i)=\vc\setminus\{x\}$.
\end{assertion}

\begin{proof}
If $|\bigcup_{i=1}^k \vc(Q_i)|<|\vc|-1$ then $|\vc(\cp')|<|\cp'|$ and by Corollary~\ref{st} $\cp'$ has a multicolored path, contradicting our assumption. Hence, $|\bigcup_{i=1}^k \vc(Q_i)|=|\vc|-1$ and the result follows by Assertion \ref{coveringall}.
\end{proof}

\begin{assertion}\label{nojumps}
Let $k\ge 2$. If   $uv \in E(P_k)$ then there exists $i$ such that $\{u,v\} \subseteq V(Q_i)$.
\end{assertion}

\begin{proof}
Assuming negation, by Assertion \ref{coveringall} there exist $i \neq j$ such that $u \in V(Q_i) \setminus V(Q_j)$ and $v \in V(Q_j) \setminus V(Q_i)$.
$P\in \cp$ be a path such $Q_i=P'$. Then the path $PuvQ_j$ is multicolored, where the edge $sx$, if present in $P$, is colored by $P_1$,
the edges in $Q_i$ by paths from $\cq_i$, the edge $uv$ by $P_k$ and the edges of $Q_j$ by paths from $\cq_j$.
\end{proof}

We can now prove Theorem \ref{regimented}.

{\bf Case I:}~~The path $xP_1$, with $x$ replaced by $s'$, is equal to some $Q_i$.

Assume first that all $P \in \cp$ for which $P'=Q_i$ are equal to $sxQ_i$. Let $j \neq i$ and let $P \in \cp$ such that $P' \in \cq_j$. If $P=sxQ_j$, then $P_1$ is multicolored, where $sx$ is colored by $P$ and the remaining
 edges of $P_1$ are colored by paths from $\cq_i$ and $P_1$. So, we may assume that for every $j \neq i$ and every $P \in \cp$ for which $P' \in \cq_j$  we have $P=sQ_j$. But this means that $\cp$ is regimented, the regimenting paths being $sxQ_i$ and  $sQ_j,~j \neq i$.

 Thus we may assume in this case that there exists $P \in \cp$ such that $P'=Q_i$ but $P\ne sxQ_i$. There are two options: (i)~$x\in V(P)$ and the initial path $Px$ contains a vertex different from $s$ and $x$. Let $y$ be the last such vertex on $Px$. By Assertion \ref{coveringall} there exists $j \in \{2, \ldots, m\}$ different from $i$,  such that $y \in V(Q_j)$. But then the edge $yx$ shows, by Assertion \ref{nojumps}, that there exists a multicolored $s-t$ path. (ii) $x\not\in V(P)$, that is $P=sQ_i$. Then $P$ is a multicolored $s-t$, where the first edge is colored by $P$ and the rest are colored by the paths in $\cq_i\setminus\{P\}$ and $P_1$. \\

 {\bf Case II:}~~The path $xP_1$, with $x$ replaced by $s'$, is not equal to any $Q_i$.

 There are two subcases: (i) $V(xP_1\setminus\{x\})\subsetneq V(Q_i\setminus\{s'\})$. This means that there exist two consecutive vertices $u,v\in V(xP_1)$ such that there exist at least one vertex in $V(Q_i)$ between $u$ and $v$. Then $P_1$ is a multicolored $s-t$, where $uv$ is colored by $P_1$ and the rest are colored by the paths $P$ such that $P'\in\cq_i$. (ii) There exist two consecutive vertices $u,v$ on $P_1$ such that the edge $uv$ does not belong to $\bigcup_{2 \le j \le m}E(Q_j)$. By Assertion \ref{coveringall} this means that there exist $i\neq j$ in $\{2, \ldots ,m\}$ such that $u \in V(Q_i)$ and $v \in V(Q_j)$. Then, again by Assertion \ref{nojumps}, there exists a multicolored $s-t$ path.

This concludes the proof of Theorem \ref{regimented}.

\section{Uniqueness of the extreme case in Theorem \ref{drisko}}

In this section we deduce Theorem \ref{main} from Theorem \ref{regimented}.
First, let us prove a weaker version of the theorem:

\begin{lemma}\label{almost}
Under the  conditions of Theorem \ref{main},
there exists a matching of size $n$ representing at least $n-1$ matchings $M_i$.
\end{lemma}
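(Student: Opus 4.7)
The plan is to adapt the network argument in the proof of Theorem~\ref{moregeneral} and apply Theorem~\ref{regimented} in the tight case $|\cp|=|\vc|$. First invoke Theorem~\ref{moregeneral} with $a=n-1$ to obtain a rainbow matching $F$ of size $n-1$ (the hypothesis $n\cdot 2\ge n-1$ is trivial). Form the network $\cn$ exactly as in the proof of Theorem~\ref{moregeneral}, with one vertex $v(f)$ per $f\in F$, so $|\vc|=n-1$. For each of the $n-1$ matchings $M_j$ not represented in $F$, pick an augmenting $F$-alternating path $T_j\subseteq F\cup M_j$ (which exists by Lemma~\ref{alternating}). If some $T_j$ is a single edge, then $F$ together with that edge is a rainbow matching of size $n$, contradicting the hypothesis of Theorem~\ref{main}. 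So every $T_j$ has length at least $3$, and the associated path $P(T_j)$ in $\cn$ has at least one inner vertex.

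Set $\cp_j=\{P(T_j)\}$ and $\cp=\bigcup_j\cp_j$. Then $|\cp|=n-1=|\vc|$. If $\cp$ were not regimented, Theorem~\ref{regimented} would supply a $\cp$-multicolored $s$--$t$ path in $\cn$, which (exactly as in the proof of Theorem~\ref{moregeneral}) translates to an augmenting rainbow $F$-alternating path and hence to a rainbow matching of size $n$, again a contradiction. So $\cp$ is regimented: write $\cp=\cp_1\cup\cdots\cup\cp_k$ with innerly disjoint paths $P_1,\ldots,P_k$ and $|\cp_i|=|E(P_i)|-1=:l_i$. Assertion~\ref{coveringall} implies that the inner vertex sets of $P_1,\ldots,P_k$ partition $\vc$, so $F$ decomposes into blocks $F_i=\{g^i_1,\ldots,g^i_{l_i}\}$ (the $F$-edges whose network vertices are the inner vertices of $P_i$, in the order they appear on $P_i$).

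The key structural observation is that every matching $M_j\in\cp_i$ uses the same ordered sequence $g^i_1,\ldots,g^i_{l_i}$ of $F$-edges in its path $T_j$, and must therefore contain the $l_i-1$ ``interior'' $M$-edges joining the $A$-endpoint of $g^i_k$ to the $B$-endpoint of $g^i_{k+1}$ for $1\le k\le l_i-1$; these are forced by the ordered sequence and do not depend on $j$. Only the two ``end'' edges of $T_j$ depend on $j$. Now pick any nonempty group $\cp_i=\{M_{j^i_1},\ldots,M_{j^i_{l_i}}\}$ (one exists since $|\cp|\ge 1$) and set
\[
F'\;:=\;F\triangle E(T_{j^i_1}),
\]
a matching of size $(n-1-l_i)+(l_i+1)=n$. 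Represent $F'$ as follows: keep the original colors on the $n-1-l_i$ edges of $F\setminus F_i$; assign the $l_i-1$ interior edges of $T_{j^i_1}$ to the distinct matchings $M_{j^i_2},\ldots,M_{j^i_{l_i}}$ (each of which contains them); and assign one of the two end edges of $T_{j^i_1}$ to $M_{j^i_1}$. This uses $(n-1-l_i)+(l_i-1)+1=n-1$ distinct matchings, as required.

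The main obstacle will be justifying the shared-interior-edges claim: that in the regimented situation all $l_i$ matchings in $\cp_i$ necessarily contain the $l_i-1$ interior edges coming from the common path $P_i$. This is forced by the rigidity of $F$-alternating paths on a prescribed ordered sequence of $F$-edges, and it is precisely this sharing that supplies the $l_i-1$ extra representations needed to upgrade the size-$n$ symmetric difference $F\triangle E(T_{j^i_1})$ into a matching representing $n-1$ members of $\cm$.
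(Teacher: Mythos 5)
Your proof is correct, but it takes a genuinely different and much heavier route than the paper's. The paper's proof is a one-line trick: duplicate one of the $M_i$ to obtain $2n-1$ matchings of size $n$, apply Theorem~\ref{drisko} to get a rainbow matching of size $n$, and observe that at most one color can be repeated (the duplicated one), so at least $n-1$ distinct members of $\cm$ are represented. You instead build a size-$(n-1)$ rainbow matching $F$ via Theorem~\ref{moregeneral}, pass to the network, invoke Theorem~\ref{regimented} in the tight case $|\cp|=|\vc|$, and exploit the regimentation: since all paths in a class $\cp_i$ traverse the same ordered block of $F$-edges, their $l_i-1$ interior augmenting edges are literally the same edges and hence lie in all $l_i$ matchings of the class, which lets you color $F\triangle E(T_{j^i_1})$ with $n-1$ distinct matchings. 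All the steps check out (the directedness of the network makes the interior edges uniquely determined by the ordered block, and the single-edge case is correctly excluded by the hypothesis of Theorem~\ref{main}), and Assertion~\ref{coveringall} does give the partition of $\vc$ you need. What your approach buys is structural information -- essentially a preview of the regimentation analysis that the paper defers to the proof of Theorem~\ref{main} itself -- but for the lemma as stated it is considerable overkill compared to the duplication argument.
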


\begin{proof}
Add to the system $M_1, \ldots ,M_{2n-2}$ a matching that is equal to one of the matchings $M_i$. By Theorem \ref{drisko} there exists now a rainbow matching of size $n$, which may represent $M_i$ twice,
represents in all $n-1$ matchings.
\end{proof}

{\em Proof of Theorem \ref{main}}

Let $F=\{e_1, \ldots ,e_{n-1}, e_n\}$ be a matching of size $n$,
representing  $n-1$ matchings among $M_1, \ldots ,M_{2n-2}$, as in Lemma \ref{almost}. Without
loss of generality we may assume that $e_i \in M_{n-1+i}$ for all
$i<n$, and that $e_n \in M_n$. Thus, both $e_1$ and $e_n$ represent
$M_n$, each of $M_{n+1}, \ldots ,M_{2n-2}$ is represented once, and $M_1, \ldots ,M_{n-1}$ are not represented at all.

Let $A,B$ be the sides of the bipartite graph,  and let $e_i=a_ib_i$,
where $a_i \in A, ~b_i\in B$. Assign to every edge $e_i$, $i <n$, a
vertex $v_i$, and let $V=\{s,t,v_1, \ldots ,v_{n-1}\}$. An augmenting $F$-alternating path $Q$ corresponds then to an $s-t$ path $P=P(Q)$ in a network $\cn$ on $V$, as follows.
The first edge of $Q$, which is $ub_i$ for some $u \in A \setminus \bigcup F$, is assigned the edge $sv_i$ in $E(P)$. Every edge $a_ib_j \in E(Q)$, where $a_i,~b_j \in \bigcup F$,  is assigned the edge $v_iv_j \in E(P)$.
The last edge of $Q$, which is $a_iw$ for some $w \in B \setminus \bigcup F$, is assigned the edge $v_it$ in $E(P)$.

As in the proof of Theorem \ref{drisko}, each matching $M_i, i <n$
generates an $F$-alternating path $Q_i$, which then translates to an $s-t$
path $P_i=P(Q_i)$ in $\cn$. By Theorem \ref{regimented} we may assume that
the system $\cp=(P_1, \ldots ,P_{n-1})$ is regimented by a set $\cp_j, j \in J$ of paths, where all paths in $\cp_j$ are equal
to the same path $P_j$.

\begin{assertion}\label{noinbetween}
No two distinct paths in $\cp$ are connected by an edge belonging to some $E(Q_i),~ i<n$.
\end{assertion}

\begin{proof}
Suppose that $\ell_1  \neq \ell_2$, and that $a_jb_k \in E(Q_i)$ for some $v_j \in V(P_{\ell_1}),~v_k\in V(P_{\ell_2})$. Clearly, then, $i \not \in \{\ell_1, \ell_2\}$, and $v_j \neq t, ~v_k \neq s$. These imply that $P_{\ell_1}v_jv_kP_{\ell_2}$ is  an $\cm$-multicolored $s-t$ path:
 there are enough paths in $\cp_{\ell_1}$ to color $E(P_{\ell_1}v_j)$, enough paths in $\cp_{\ell_2}$ to color $v_kP_{\ell_2}$, and the edge $v_jv_k$ is colored by $P_i$.
\end{proof}

 \begin{assertion}\label{loops}
 For every $i<n$ if $v_j \in V \setminus V(P_i)$ then  $e_j \in M_i$.
 \end{assertion}

 \begin{proof}
  Let $E_i=\{v_jv_\ell \mid a_jb_\ell \in M_i\}$.
    Since $\bigcup V(P_i)=V$, and since $E_i$ is the union of $V(P_i)$ and cycles, if $(v) \not \in E_i$ then  $v$ lies on a non-singleton cycle $C$ in $E_i$. By Assertion \ref{noinbetween}
  $V(C) \subseteq V(P_j)$ for some $ j \neq i$. Let $\ell$ be such that $P_j \in \cp_\ell$. There exists an edge in $C$, say $xy$, that goes forward on $P_j$. Then the path $P_jxyP_j$ is $\cm$-multicolored, where $xy$ is colored by $M_i$ and $E(P_jx) \cup E(yP_j)$ are colored by
 matchings $M_k$ for which $P_k \in \cp_\ell$.
   \end{proof}

 \begin{assertion}
 $|J|=1$.
 \end{assertion}

 \begin{proof}
 Suppose not. Then there exists $i<n$ such that $v_1 \not \in V(P_i)$, and by Assertion \ref{loops} $e_1 \in M_i$. Re-coloring $e_1$ by $M_i$, and keeping the coloring of all other $e_i$'s
 (that is, $e_i$ is colored  by $M_{n-1+i}$ for all $i\neq 1$) results then in a rainbow matching of size $n$.
   \end{proof}

Let the single path in the
regimentation be $P=sv_{i_1}v_{i_2}\ldots v_{i_{n-1}}t$. For each $k < n$ the fact that the first edge of $P$, ~~$sv_{i_1}$, belongs to
$P_k$ means that there exits $c(k) \in A\setminus \{a_1, \ldots
,a_{n-1}\}$ such that $c(k)b_{i_1} \in P_k$.

\begin{assertion}
$c(k) = a_n$.
 \end{assertion}

(Remember that $e_n=a_nb_n$ is one of
the two edges of $F$ belonging to $M_n$)
\begin{proof}
Assume that $c(k) \neq a_n$. Let $j$ be such that $i_j=1$.
 Apply to $F$ the alternating path $P_kb_{i_j}$ (namely the initial part of $P_k$, up to and including $b_{i_j}$), and add the edge $e_n$. The resulting matching, $F \triangle E(P_kb_{i_j}) \cup \{e_n\}$, can be colored by $n$ matchings $M_i$
($e_n$ is colored by color $1$), contradicting the negation assumption.
\end{proof}

Symmetrically,  the edge $v_{i_{n-1}}t$ represents the edge
$a_{i_{n-1}}b_n$ in all $P_k$s. This means that all  matchings $M_k$, ~~ $k
\le n-1$, are identical, each consisting of the edges $a_nb_{i_1},
a_{i_1}b_{i_2}, \ldots ,a_{i_{n-1}}b_n$.

In particular, this easily implies that there exists a  matching of size $n$ representing each of the matchings $M_1, \ldots ,M_{n-1}$, one of them twice. Applying a symmetric argument to the one above, we deduce that also $M_n, \ldots, M_{2n-2}$ are identical matchings, and this clearly implies that they are all equal to the matching $F$ in the proof above. This shows that the matchings $M_1, \ldots M_{2n-2}$ form a cycle, with the even edges being in the first $n$ matchings and the odd edges being the last $n-1$ matchings, as desired.

\section{The extreme case in the Erd\H{o}s-Ginzburg-Ziv theorem}

The Erd\H{o}s-Ginzburg-Ziv theorem \cite{egz} states that a multiset of $2n-1$ elements in $\mathbb{Z}_n$ has a sub-multiset of size $n$ summing up to $0 \pmod n$.  Alon \cite{alon} noted that the EGZ theorem can be deduced from Theorem \ref{drisko}, as follows. Let $A$ be a multi set of
size $2n-1$ in $\zn$. For each $a \in A$ define a matching $M_a$ in a bipartite graph with both sides indexed by $\zn$, by $M_a=\{(i,i+a) \mid i \in \zn\}$. By Theorem \ref{drisko} there exists a sub-multiset $B$ of size
$n$ of $A$, and a matching $(i,i+b(i)) \mid i \in \zn\}$, where $~~b(i) \in B$. Then the sum $\sum_{i \in \zn} i+b(i)$ includes in its terms every element of $\zn$ precisely once, and hence $\sum_{i \in \zn} i+b(i)\equiv\sum_{i \in \zn} i \pmod n$, implying that $\sum_{i \in \zn} b(i)\equiv 0 \pmod n$.

An example showing tightness is a multiset consisting of $n-1$ copies of $a$ and $n-1$ copies of $b$, where $gcd(b-a,n)=1$. The following was proved in  \cite{abc, flores, yuster}, and here we give it yet another proof.

\begin{theorem}
Any multiset  of size $2n-2$ in $\zn$ not having a sub-multiset of size $n$ summing up to $0 \bmod n$ is of the above form.
\end{theorem}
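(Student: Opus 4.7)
The plan is to combine Alon's reduction (used above to derive the EGZ theorem from Theorem \ref{drisko}) with the structural characterization given by Theorem \ref{main}. I start with a multiset $A$ of size $2n-2$ in $\zn$ having no zero-sum sub-multiset of size $n$, and associate with each $a\in A$ the perfect matching $M_a=\{(i,i+a) \mid i\in \zn\}$ in the bipartite graph on $\zn\sqcup\zn$. This yields a family $\cm$ of $2n-2$ matchings of size $n$. Alon's sum computation (reproduced in the paragraph preceding the theorem) shows that a rainbow matching of size $n$ in $\cm$ would produce a size-$n$ zero-sum sub-multiset of $A$, contradicting the hypothesis; hence no such rainbow matching exists.

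I then invoke Theorem \ref{main}: $\bigcup\cm$ is the edge set of a cycle $C$ of length $2n$, and $\cm$ splits into two subfamilies of size $n-1$, one consisting of copies of the even-edge matching of $C$ and the other of copies of the odd-edge matching. Translating back through Alon's correspondence, $A$ consists of exactly two distinct elements $a,b\in\zn$, each appearing with multiplicity $n-1$, and $\{M_a,M_b\}$ is the pair of perfect matchings of $C$.

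The remaining point is that $\gcd(b-a,n)=1$. Alternating edges of $M_a$ and $M_b$ starting at a left vertex $i$ induces the translation $i\mapsto i+(a-b)$ after two steps, so $M_a\cup M_b$ is a disjoint union of cycles each of length $2n/\gcd(a-b,n)$. In order for this union to be a single $2n$-cycle, as dictated by Theorem \ref{main}, we must have $\gcd(a-b,n)=1$, which is the stated condition.

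I do not expect a real obstacle here: all the substance of the argument is packaged inside Theorem \ref{main}, and the rest is bookkeeping via Alon's dictionary between sub-multisets of $\zn$ and rainbow matchings in the family $(M_a)_{a\in A}$, together with the elementary cycle-length calculation for $M_a\cup M_b$.
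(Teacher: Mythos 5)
Your proof is correct and follows the paper's route: Alon's reduction to the family $(M_a)_{a\in A}$, the observation that a rainbow matching of size $n$ would yield a zero-sum sub-multiset, and then Theorem \ref{main} to conclude that $A$ consists of two values $a,b$, each with multiplicity $n-1$. The only divergence is the last step, establishing $\gcd(b-a,n)=1$: you extract it from the structural part of Theorem \ref{main}'s conclusion, namely that $\bigcup\cm=M_a\cup M_b$ must be a single cycle of length $2n$, whereas $M_a\cup M_b$ decomposes into cycles of length $2n/\gcd(a-b,n)$. The paper instead argues arithmetically: if $\gcd(b-a,n)>1$, choose $0<k<n$ with $k(b-a)\equiv 0\pmod n$; then $k$ copies of $b$ and $n-k$ copies of $a$ form a sub-multiset of size $n$ summing to $0\pmod n$, contradicting the hypothesis directly. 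Both arguments are valid and short; yours uses the full strength of the cycle description in Theorem \ref{main}, while the paper's only needs the ``two values, each of multiplicity $n-1$'' part of that conclusion.
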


\begin{proof}
Let $A$ be a multiset with the above property. As shown above, if there is no multi-subset as desired then the set of matchings $M_c=\{(i,i+c) \mid i \in \zn\}$, $c\in A$, does not have a rainbow matching of size $n$. By Theorem \ref{main} it follows that there are $a,b$ such that $n-1$ of the matchings $M_c$, $c\in A$, are equal to $M_a$, and $n-1$ of them are equal to $M_b$. It remains to show that $gcd(b-a,n)=1$. If $gcd(b-a,n)>1$ then there exists $k$ with $0< k<n$ such that $k(b-a) \equiv 0 \pmod n$.
Then the sum of $k$ copies of $b$ and $n-k$ copies of $a$ is $0 \pmod n$, contrary to the assumption on $A$.
\end{proof}

{\bf Acknowledgement}
We are indebted to Eli Berger for a helpful observation. 


\end{document}